\newtheorem{theorem}{Theorem}[section]
\newtheorem{proposition}[theorem]{Proposition}
\theoremstyle{definition}
\newtheorem{remark}[theorem]{Remark}
\numberwithin{equation}{section}
\def\imod#1{\allowbreak\mkern5mu({\operator@font mod}\,\,#1)}
\begin{document}

\title[Generalized Fishburn numbers and torus knots]{Generalized Fishburn numbers and torus knots}

\author[C. Bijaoui]{Colin Bijaoui}

\author[H. U. Boden]{Hans U. Boden}

\author[B. Myers]{Beckham Myers}

\author[R. Osburn]{Robert Osburn}

\author[W. Rushworth]{William Rushworth}

\author[A. Tronsgard]{Aaron Tronsgard}

\author[S. Zhou]{Shaoyang Zhou}

\address{Department of Mathematics $\&$ Statistics, McMaster University, Hamilton Hall, 1280 Main Street West, Hamilton, Ontario, Canada L8S 4K1}

\email{bijaoucg@mcmaster.ca, boden@mcmaster.ca, will.rushworth@math.mcmaster.ca}

\address{Department of Mathematics, Harvard University, Science Center Room 325, 1 Oxford Street, Cambridge, MA 02138, USA}

\email{bmyers@college.harvard.edu}

\address{School of Mathematics and Statistics, University College Dublin, Belfield, Dublin 4, Ireland}

\email{robert.osburn@ucd.ie}

\address{Department of Mathematics, University of Toronto, Toronto, Ontario, M5S 2E4, Canada}

\email{tronsgar@math.toronto.edu}

\address{Department of Mathematics, Vanderbilt University, Nashville, TN 37240, USA}

\email{shaoyang.zhou@vanderbilt.edu}

\subjclass[2010]{33D15, 57K16}
\keywords{Generalized Fishburn numbers, colored Jones polynomial, torus knots, congruences}

\date{\today}

\begin{abstract}
Andrews and Sellers recently initiated the study of arithmetic properties of Fishburn numbers. In this paper we prove prime power congruences for generalized Fishburn numbers. These numbers are the coefficients in the $1-q$ expansion of the Kontsevich-Zagier series $\mathscr{F}_{t}(q)$ for the torus knots $T(3,2^t)$, $t \geq 2$. The proof uses a strong divisibility result of Ahlgren, Kim and Lovejoy and a new ``strange identity" for $\mathscr{F}_{t}(q)$. 
\end{abstract}

\maketitle

\section{Introduction}

The Fishburn numbers $\xi(n)$ are the coefficients in the formal power series expansion

\begin{equation} \label{fish}
F(1-q) =: \sum_{n \geq 0} \xi(n) q^n = 1 + q + 2q^2 + 5q^3 + 15q^4 + 53q^5 + \cdots
\end{equation}

\noindent where $F(q) := \sum_{n \geq 0} (q)_n$ is the Kontsevich-Zagier ``strange" series \cite{z1} and

\begin{equation*}
(a_1, a_2, \dotsc, a_j)_n = (a_1, a_2, \dotsc, a_j ; q)_n := \prod_{k=1}^{n}(1-a_1 q^{k-1})(1 - a_2 q^{k-1}) \cdots (1-a_j q^{k-1})
\end{equation*} 

\noindent is the standard $q$-hypergeometric notation, valid for $n \in \mathbb{N} \cup \{ \infty \}$. 
Here, the moniker ``strange" is used as $F(q)$ does not converge on any open subset of $\mathbb{C}$, but is well-defined when $q$ is a root of unity (where it is finite) and when $q$ is replaced by $1-q$ as in (\ref{fish}). The Fishburn numbers are of interest for their numerous combinatorial variants (see A022493 in \cite{oeis}), asymptotics \cite{sto, z1} and arithmetic properties \cite{ak, as, garvan, gkr, straub}. In their marvelous paper, Andrews and Sellers \cite{as} proved congruences for $\xi(n)$ modulo primes which were then extended to prime powers \cite{ak, straub}. For example, we have

\begin{equation} \label{mod5}
\xi(5^r m - 1) \equiv \xi(5^r m - 2) \equiv 0 \pmod{5^r}, 
\end{equation}
\begin{equation} \label{mod7}
\xi(7^r m - 1) \equiv 0 \pmod{7^r}
\end{equation}
\noindent and
\begin{equation} \label{mod11}
\xi(11^r m -1) \equiv \xi(11^r m -2) \equiv \xi(11^r m - 3) \equiv 0 \pmod{11^r}
\end{equation}

\noindent for all natural numbers $r$ and $m$. Our interest in this paper lies in the knot theoretic interpretation of $F(q)$ as it leads to a natural generalization of the coefficients $\xi(n)$.

Let $K$ be a knot and $J_N(K;q)$ be the usual colored Jones polynomial, normalized to be $1$ for the unknot. As a knot invariant, the colored Jones polynomial plays the lead role in many open problems in quantum topology. The sequence $\{J_N(K;q)\}_{N \in \mathbb{N}}$ appears to encode many subtle geometric and topological properties of the knot $K$ at a remarkably deep level. For example, the Volume Conjecture  \cite{kashaev, hm, murakami-murakami} relates the value at $\zeta_N = e^{2\pi i/N}$ of the $N$th colored Jones polynomial (or, equivalently, the $N$th Kashaev invariant) of a knot to its hyperbolic volume; the Strong Slope Conjecture \cite{garoufalidis-slope, kal-tran} posits that the maximal and minimal degrees of $J_N(K;q)$ in $q$ contain information about essential surfaces in knot exteriors; the AJ Conjecture \cite{garoufalidis-AJ} connects the recurrence relation for $J_N(K;q)$ to the $A$-polynomial of the knot \cite{ccgls}, a plane curve describing the $SL(2,{\mathbb C})$ character variety of the knot complement. Explicit formulas for $J_{N}(K;q)$ in terms of $q$-hypergeometric series are also of importance and have been proven for various families of knots \cite{habiro, hikami1, hikami2, thang, loCJP, loCJP2, masbaum, takata, walsh}. For example, if $T(3,2)=T(2,3)$ is the right-handed trefoil knot as in Figure \ref{fig:trefoil}, then \cite{habiro, thang}

\begin{equation} \label{t32}
J_N(T(3,2); q) = q^{1-N} \sum_{n \geq 0} q^{-nN} (q^{1-N})_n.
\end{equation}

\begin{figure}[ht]
\includegraphics[width=.2\linewidth]{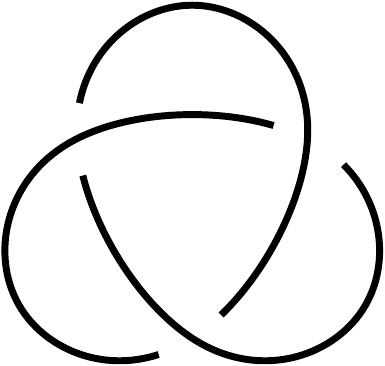}
\caption{$T(3,2)$}
\label{fig:trefoil}
\end{figure}

\noindent Observe that the Kontsevich-Zagier series $F(q)$ matches the colored Jones polynomial for $T(3,2)$ at roots of unity, that is, for $q=\zeta_N$ we have

\begin{equation} \label{Fcjpmatch}
\zeta_N F(\zeta_N) = J_N(T(3,2); \zeta_N).
\end{equation}

Consider the family of torus knots $T(3, 2^t)$ for $t \geq 2$ as in Figures \ref{fig:t34} and \ref{fig:t32t}. 

\begin{figure}[ht]
\centering
\begin{minipage}{.5\textwidth}
  \centering
  \includegraphics[width=.4\linewidth]{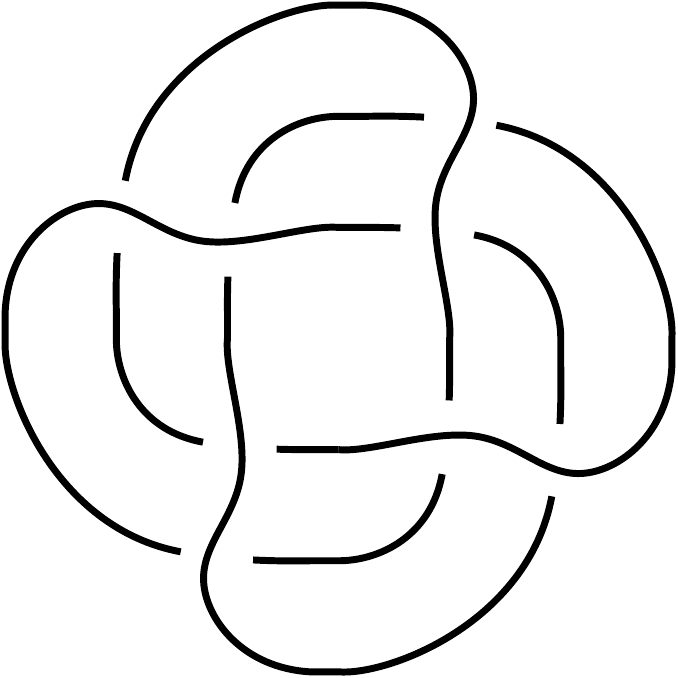}
  \caption{$T(3,4)$}
  \label{fig:t34}
\end{minipage}%
\begin{minipage}{.5\textwidth}
  \centering
  \includegraphics[width=.6\linewidth]{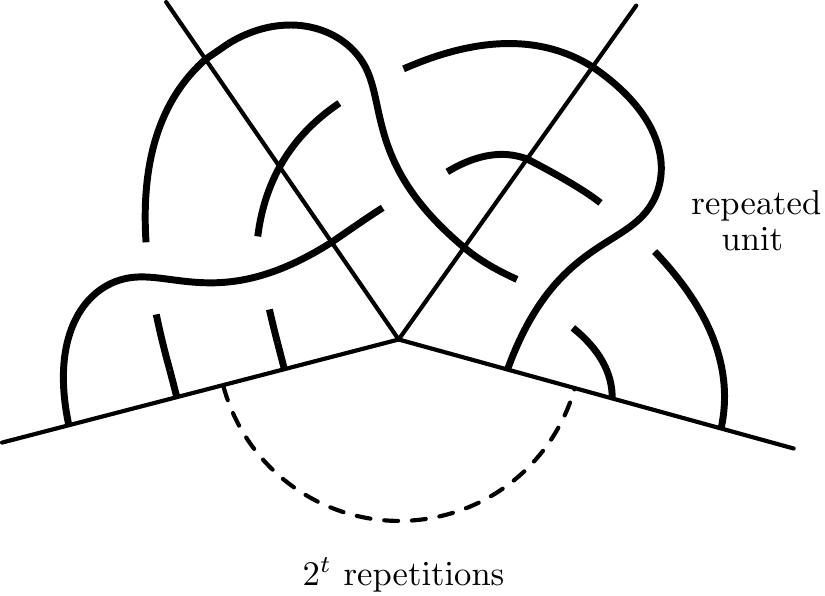}
  \caption{$T(3,2^t)$}
  \label{fig:t32t}
\end{minipage}
\end{figure}

\noindent In this case, a $q$-hypergeometric expression for the colored Jones polynomial has been computed, namely (see page 41, Th{\'e}or{\`e}me 3.2 in \cite{konan})

\begin{align} \label{t32t}
J_N(T(3,2^t); q) & = (-1)^{h''(t)} q^{2^t - 1 - h'(t) - N} \sum_{n \geq 0} (q^{1-N})_n q^{-Nn m(t)} \nonumber \\
& \times \sum_{3 \sum_{\ell=1}^{m(t)-1} j_{\ell} \ell \equiv 1 \pmod{m(t)}} (-q^{-N})^{\sum_{\ell=1}^{m(t)-1} j_{\ell}} q^{\frac{-a(t) + \sum_{\ell=1}^{m(t)-1} j_{\ell} \ell}{m(t)} + \sum_{\ell=1}^{m(t)-1} \binom{j_{\ell}}{2}} \nonumber \\
& \quad \quad \times \sum_{k=0}^{m(t) - 1} q^{-kN} \prod_{\ell=1}^{m(t) - 1} \begin{bmatrix} n + I(\ell \leq k) \\ j_{\ell} \end{bmatrix} 
\end{align}

\noindent where 

\begin{equation*}
h''(t) = 
\begin{cases}
\frac{2^t-1}{3} &\text{if $t$ is even}, \\
\frac{2^t -2}{3} &\text{if $t$ is odd}, \\
\end{cases} \quad \quad
h'(t) =
\begin{cases}
\frac{2^t-4}{3} &\text{if $t$ is even}, \\
\frac{2^t -5}{3} &\text{if $t$ is odd}, \\
\end{cases} \quad \quad 
a(t)=
\begin{cases}
\frac{2^{t-1} +1}{3} &\text{if $t$ is even}, \\
\frac{2^t +1}{3} &\text{if $t$ is odd}, \\
\end{cases}
\end{equation*}

\noindent $m(t)=2^{t-1}$, $I(*)$ is the characteristic function and

\begin{equation*}
\begin{bmatrix} n \\ k \end{bmatrix} = \begin{bmatrix} n \\ k \end{bmatrix}_{q} := \frac{(q)_n}{(q)_{n-k} (q)_k}
\end{equation*}

\noindent is the $q$-binomial coefficient. We note that the $t=2$ case of (\ref{t32t}) recovers equation (16) in \cite{hikami1}. We now define the {\it Kontsevich-Zagier series for torus knots $T(3,2^t)$} as\footnote{\label{t1}For $t=1$, one may define the sum over the $j_{\ell}$ to be 1 in (\ref{t32t}) and (\ref{kztorus}) to recover (\ref{t32}) and $F(q)$.}

\begin{align} \label{kztorus} 
\mathscr{F}_t(q) & = (-1)^{h''(t)} q^{-h'(t)} \sum_{n \geq 0} (q)_n \sum_{3 \sum_{\ell = 1}^{m(t) - 1} j_{\ell} \ell \equiv 1 \pmod{m(t)}} q^{\frac{-a(t) + \sum_{\ell=1}^{m(t) - 1} j_{\ell} \ell}{m(t)} + \sum_{\ell=1}^{m(t)-1} \binom{j_{\ell}}{2}} \nonumber \\
& \times \sum_{k=0}^{m(t)-1} \prod_{\ell=1}^{m(t) - 1} \begin{bmatrix} n + I(\ell \leq k) \\ j_{\ell} \end{bmatrix}
\end{align}

\noindent and notice that, similar to (\ref{Fcjpmatch}), we have

\begin{equation*}
\zeta_{N}^{2^t-1} \mathscr{F}_t(\zeta_N) = J_N(T(3,2^t); \zeta_N).
\end{equation*}

\noindent As with the original Kontsevich-Zagier series, $\mathscr{F}_t(q)$ does not converge on any open subset of $\mathbb{C}$, but is well-defined when $q$ is a root of unity (where it is finite) and when $q$ is replaced by $1-q$. Thus, we may write

\begin{equation*}
\mathscr{F}_t(1-q) =: \sum_{n \geq 0} \xi_t(n) q^n.
\end{equation*}

\noindent For example, 

\begin{equation*}
\mathscr{F}_2(1-q) = 1 + 3q + 11q^2 + 50q^3 + 280q^4 + 1890q^5 + \cdots
\end{equation*}
\noindent and
\begin{equation*}
\mathscr{F}_3(1-q) = 1 + 7q + 49q^2 + 420q^3 + 4515q^4 + 59367q^5 + \cdots.
\end{equation*}

The purpose of this paper to illustrate how a recent result of Ahlgren, Kim and Lovejoy \cite{akl} combined with a new ``strange identity" for $\mathscr{F}_t(q)$ allow one to prove prime power congruences akin to (\ref{mod5})--(\ref{mod11}) for the {\it generalized Fishburn numbers} $\xi_t(n)$. For natural numbers $s$ and $t \geq 2$ and the periodic function

\begin{equation} \label{genchi}
\chi_t(n) = \chi_{3\cdot 2^{t+1}}(n) :=
\begin{cases}
1 &\text{if $n \equiv 2^{t+1}-3$, $3+2^{t+2}$ $\pmod{3\cdot2^{t+1}}$}, \\
-1 &\text{if $n \equiv 2^{t+1} + 3$, $2^{t+2} - 3$ $\pmod{3\cdot2^{t+1}}$,} \\
0 &\text{otherwise,}
\end{cases}
\end{equation} 

\noindent we define the set

\begin{equation*}
S_{t, \chi_t}(s) = \Bigl \{ 0 \leq j \leq s-1 : j \equiv \frac{n^2 - (2^{t+1} - 3)^2}{3 \cdot 2^{t+2}} \pmod{s} \,\, \text{where} \,\, \chi_{t}(n) \neq 0 \Bigr \}.
\end{equation*}
Our main result is now the following.

\begin{theorem} \label{main} If $p \geq 5$ is a prime and $j \in \{1, 2, \dotsc, p - 1 - \text{max} \,\, S_{t, \chi_t}(p) \}$, then

\begin{equation*} \label{xiprimepowers}
\xi_t(p^r m - j) \equiv 0 \pmod{p^r}
\end{equation*}

\noindent for all natural numbers $r$, $m$ and $t \geq 2$.
\end{theorem}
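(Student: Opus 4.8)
The plan is to reduce Theorem \ref{main} to the Ahlgren--Kim--Lovejoy divisibility result \cite{akl} by way of a strange identity, exactly as in the classical Fishburn case. The central new step is to prove that the nested $q$-hypergeometric series (\ref{kztorus}) collapses, in the Kontsevich--Zagier sense, to a partial theta function. Concretely, I expect the strange identity
\begin{equation*}
\mathscr{F}_t(q) \ \doteq \ -\frac{1}{2} \sum_{n \geq 1} n \, \chi_t(n) \, q^{\frac{n^2 - (2^{t+1}-3)^2}{3 \cdot 2^{t+2}}},
\end{equation*}
where $\chi_t$ is the periodic function of (\ref{genchi}) and $\doteq$ denotes agreement of the two sides to all orders in their asymptotic expansions at roots of unity. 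The shape of the right-hand side is forced by the torus-knot data: for $T(3,2^t)$ one has $s=3$ and $t'=2^t$, so $4st' = 3\cdot 2^{t+2}$ is the denominator of the quadratic exponent, $2st' = 3\cdot 2^{t+1}$ is the modulus of $\chi_t$, and $st'-s-t' = 2^{t+1}-3$ is the minimal index, matching $(2^{t+1}-3)^2$ in the numerator. To establish the identity I would either build a Bailey pair adapted to the inner sum over the $j_\ell$ with the constraint $3\sum_\ell j_\ell \ell \equiv 1 \imod{m(t)}$ and telescope, or compute the radial limits of (\ref{kztorus}) at each $\zeta_N$ and match them against the Eichler integral of the theta function attached to $\chi_t$, in the spirit of Hikami's work on quantum invariants of torus knots. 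A sanity check is the trefoil case $t=1$ (see footnote \ref{t1}), where the identity must reduce to Zagier's $F(q) \doteq -\tfrac12 \sum_{n \geq 1} n \left( \tfrac{12}{n} \right) q^{(n^2-1)/24}$.

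With the strange identity in hand, the substitution $q \mapsto 1-q$ is routine: because the two sides agree to all orders at $q=1$, the $(1-q)$-expansion coefficients $\xi_t(n)$ of $\mathscr{F}_t(q)$ coincide with the coefficients in the asymptotic expansion of the partial theta function about $q=1$. This is precisely the input required by \cite{akl}: the generalized Fishburn numbers $\xi_t(n)$ are realized as the $(1-q)$-coefficients of a partial theta function whose quadratic exponents are $\frac{n^2 - (2^{t+1}-3)^2}{3 \cdot 2^{t+2}}$ and whose attached periodic function is $\chi_t$.

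The final step feeds this data into the strong divisibility theorem of \cite{akl}. For a prime $p \geq 5$ we have $p \nmid 3\cdot 2^{t+2}$, so $\frac{n^2 - (2^{t+1}-3)^2}{3 \cdot 2^{t+2}}$ is a well-defined residue modulo $p$, and as $n$ runs over the support of $\chi_t$ these residues are exactly the set $S_{t,\chi_t}(p)$. The Ahlgren--Kim--Lovejoy result then yields $\xi_t(p^r m - j) \equiv 0 \imod{p^r}$ for all $j$ governed by these residues, the upper bound being controlled by the largest one, which gives precisely the range $1 \leq j \leq p-1-\max S_{t,\chi_t}(p)$. One can check the bookkeeping against the known Fishburn congruences: for the trefoil, reducing $(n^2-1)/24$ modulo $5$, $7$, and $11$ gives $S = \{0,1,2\}$, $\{0,1,2,5\}$, $\{0,1,2,4,5,7\}$, hence $\max S = 2,5,7$ and $j$-ranges $\{1,2\}$, $\{1\}$, $\{1,2,3\}$, reproducing (\ref{mod5})--(\ref{mod11}) exactly.

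The main obstacle is the first step. The second and third steps are essentially formal once the strange identity is available, repackaging the Kontsevich--Zagier formalism and citing \cite{akl}. By contrast, showing that the triple sum in (\ref{kztorus}), with its congruence-constrained inner summation, reduces to a single partial theta function with exactly the periodic function $\chi_t$ and the exact quadratic form prescribed above is the genuinely new and delicate computation; the difficulty lies in finding the right Bailey-type mechanism (or root-of-unity evaluation) uniformly in $t$ and in pinning down the normalizing constants $h'(t)$, $h''(t)$, $a(t)$, $m(t)$ so that the exponents and residues line up with (\ref{genchi}) and the definition of $S_{t,\chi_t}(p)$.
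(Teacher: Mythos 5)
Your overall strategy is the paper's: posit a strange identity expressing $\mathscr{F}_t(q)$ asymptotically as $-\tfrac{1}{2}P^{(1)}_{(2^{t+1}-3)^2,\,3\cdot 2^{t+2},\,\chi_t}$ at every root of unity, feed the pair consisting of $\chi_t$ and the quadratic exponent into the Ahlgren--Kim--Lovejoy dissection theorem, and read off the congruences; your identification of $\chi_t$, of the exponent $\frac{n^2-(2^{t+1}-3)^2}{3\cdot 2^{t+2}}$, and of the set $S_{t,\chi_t}(p)$ is exactly right, as is the trefoil sanity check. But there are two genuine gaps. The central one is that the strange identity is only asserted: you name two candidate methods and carry out neither, and the Bailey-pair mechanism you lean on is not known to exist here --- the paper explicitly leaves open a Bailey-pair proof even of the companion product identity (\ref{genslater}). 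What the paper actually does is introduce the two-variable series $H_t(x,q)$ attached to $\chi_t$, show it satisfies Konan's difference equation (\ref{diff}) and hence equals the multi-sum in (\ref{Htomulti}), rewrite that as (\ref{rewrite}) with $(1-x)M_t(x,q)=\sum_{n\geq 0}b_{n,t}(q)x^n$, differentiate at $x=1$, and finish with Watson's quintuple product identity. None of this is visible in your sketch, and without some such mechanism the identity remains a (well-motivated) conjecture.

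The second gap is the claim that the congruence then follows by ``citing \cite{akl}.'' Theorem \ref{strangetocong} does not output congruences for $\xi_t(n)$; it outputs the divisibility $(q)_{\lfloor (N+1)/p\rfloor} \mid A_{\mathscr{F}_t,p}(N,i,q)$ for $i \notin S_{t,\chi_t}(p)$. To pass from that to $\xi_t(p^r m - j)\equiv 0 \pmod{p^r}$ one must truncate at $N=pn-1$, observe that the $i\notin S_{t,\chi_t}(p)$ part of the $(1-q)$-dissection acquires a factor $(1-(1-q)^p)^n$, which is $O(q^{pn-(p-1)(r-1)}) \pmod{p^r}$ by Straub's Lemma 3.3, and then kill the coefficient of $q^{p^r m - j}$ in the $i\in S_{t,\chi_t}(p)$ part via the binomial congruence $\binom{i+lp}{p^r m - j}\equiv 0\pmod{p^r}$, valid for $j<p-i$ by Straub's Lemma 3.4; it is this last inequality that produces the stated range $1\le j\le p-1-\max S_{t,\chi_t}(p)$. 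This is standard Andrews--Sellers/Straub machinery, but it is an argument rather than a citation, and your write-up omits it entirely.
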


\noindent One can check that $S_{2, \chi_2}(5)=\{ 0, 2, 3 \}$, $S_{2, \chi_2}(17) = \{ 0, 2, 3, 4, 7, 8, 9, 11, 14 \}$, $S_{3, \chi_3}(7)=\{ 0, 2, 3, 4 \}$ and $S_{3, \chi_3}(13)=\{0,2,5,6,7,8,11\}$. Thus, by Theorem \ref{main}, we have

\begin{equation*} 
\xi_2(5^r m - 1 ) \equiv 0 \pmod{5^r}, 
\end{equation*}
\begin{equation*} 
\xi_2(17^r m- 1 ) \equiv \xi_2(17^r m - 2) \equiv 0 \pmod{17^r},
\end{equation*}
\begin{equation*} 
\xi_3(7^r m - 1) \equiv \xi_3(7^r m - 2) \equiv 0 \pmod{7^r} 
\end{equation*}
\noindent and 
\begin{equation*} \label{xi3mod13}
\xi_3(13^r m -1) \equiv 0 \pmod{{13}^r}
\end{equation*}

\noindent for all natural numbers $r$ and $m$. 

The paper is organized as follows. In Section 2, we recall the main result from \cite{akl} and then record some preliminaries, including the new ``strange identity" (see Theorem \ref{newstr}). Theorem \ref{newstr} is of independent interest for at least three reasons. First, there seem to be few such explicit instances in which the underlying identity has been proven (for example, see equation (70) in \cite{hikami2} and Theorem 2 in \cite{z1}). Second, this result implies (via Theorem \ref{strangetocong}) a strong divisibility property for the coefficients of the partial sums of the dissection of $\mathscr{F}_t(q)$. This is used in turn to prove Theorem \ref{main}. Third, it is a key component in determining the quantum modularity for $\mathscr{F}_t(q)$ \cite{ano}. In Section 3, we prove Theorem \ref{main}. Finally, in Section 4, we discuss some possibilities for future work.

\section{Preliminaries}

Our first step is to recall the setup from \cite{akl}. Let $\mathcal{F}$ be a function of the form

\begin{equation} \label{genF}
\mathcal{F}(q) = \sum_{n \geq 0} (q)_n f_n(q)
\end{equation}

\noindent where $f_n(q) \in \mathbb{Z}[q]$. For positive integers $s$ and $N$, consider the partial sum

\begin{equation*} 
\mathcal{F}(q;N) := \sum_{n=0}^{N} (q)_n f_n(q)
\end{equation*}

\noindent and its $s$-dissection 

\begin{equation*}
\mathcal{F}(q; N) = \sum_{i=0}^{s-1} q^i A_{\mathcal{F}, s}(N,i,q^s)
\end{equation*}

\noindent where $A_{\mathcal{F}, s}(N,i,q) \in \mathbb{Z}[q]$. Now, consider the partial theta functions

\begin{equation} \label{partial}
P_{a,b,\chi}^{(\nu)}(q) := \sum_{n \geq 0} n^{\nu} \chi(n) q^{\frac{n^2 - a}{b}}
\end{equation}

\noindent where $\nu \in \{0,1\}$, $a \geq 0$, $b > 0$ are integers and $\chi: \mathbb{Z} \to \mathbb{C}$ is a function satisfying the following two conditions:
\begin{equation} \label{p1}
\text{$\chi(n) \neq 0$ only if $\frac{n^2 - a}{b} \in \mathbb{Z}$}
\end{equation}

\noindent and for each root of unity $\zeta$, 
\begin{equation} \label{p2}
\text{the function $n \to \zeta^{\frac{n^2 - a}{b}} \chi(n)$ is periodic and has mean value zero.}
\end{equation}

\noindent Finally, define the set $S_{a,b,\chi}(s)$ by

\begin{equation*}
S_{a,b,\chi}(s) = \Biggl \{ 0 \leq j \leq s-1 : j \equiv \frac{n^2 - a}{b} \pmod{s}  \,\, \text{where} \,\  \chi(n) \neq 0 \Biggr \}.
\end{equation*}

\noindent We can now state the main result in \cite{akl}.

\begin{theorem} \label{strangetocong} Suppose that $\mathcal{F}$ and $P_{a,b,\chi}^{(\nu)}$ are functions as in (\ref{genF}) and (\ref{partial}) and for each root of unity $\zeta$, we have the asymptotic expansion

\begin{equation*}
\mathcal{F}(\zeta e^{-t}) \sim P_{a,b,\chi}^{(\nu)}(\zeta e^{-t})
\end{equation*}

\noindent as $t \to 0^{+}$. Suppose that $s$ and $N$ are positive integers and $i \not \in S_{a,b,\chi}(s)$. Then 

\begin{equation*}
(q)_{\lambda(N,s)} \mid A_{\mathcal{F},s}(N,i,q)
\end{equation*}

\noindent where $\lambda(N,s) := \bigl \lfloor \frac{N+1}{s} \bigr \rfloor$.

\end{theorem}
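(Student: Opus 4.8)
The plan is to reduce the divisibility $(q)_{\lambda(N,s)}\mid A_{\mathcal{F},s}(N,i,q)$ to a statement about orders of vanishing at roots of unity, and then to extract those orders from the strange identity by means of a roots-of-unity filter. Writing $\lambda=\lambda(N,s)$ and factoring into cyclotomic polynomials,
\[
(q)_\lambda=\prod_{k=1}^{\lambda}(1-q^k)=\prod_{d=1}^{\lambda}\Phi_d(q)^{\lfloor \lambda/d\rfloor},
\]
and using that each $\Phi_d$ is irreducible over $\mathbb{Q}$ while $A_{\mathcal{F},s}(N,i,q)\in\mathbb{Z}[q]$, I would first observe (via Gauss's lemma) that it suffices to prove the following: for every $d\ge 1$ and every primitive $d$-th root of unity $\eta$, the polynomial $A_{\mathcal{F},s}(N,i,q)$ vanishes at $q=\eta$ to order at least $\lfloor \lambda/d\rfloor$. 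For $d>\lambda$ there is nothing to prove, so fix $d\le\lambda$ and a primitive $d$-th root of unity $\eta$.

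Next I would isolate the dissection coefficient by the standard roots-of-unity filter. With $\omega=e^{2\pi i/s}$ and $\xi$ any $s$-th root of $\eta$ (so that $\xi^s=\eta$), the definition of the $s$-dissection yields, after setting $q=\xi e^{-u}$,
\[
\xi^{\,i}e^{-iu}\,A_{\mathcal{F},s}(N,i,\eta e^{-su})=\frac{1}{s}\sum_{\ell=0}^{s-1}\omega^{-i\ell}\,\mathcal{F}(\omega^{\ell}\xi e^{-u};N),
\]
so it is enough to control the Taylor expansion of the right-hand side in $u$ at $u=0$. For fixed $\ell$ set $\zeta_\ell=\omega^\ell\xi$ and let $e_\ell$ denote its multiplicative order. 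Since $1-q^k$ vanishes at $\zeta_\ell$ precisely when $e_\ell\mid k$, the product $(q)_n$ vanishes there to order $\lfloor n/e_\ell\rfloor$; hence for $M\ge N$ the difference $\mathcal{F}(q;M)-\mathcal{F}(q;N)=\sum_{n=N+1}^{M}(q)_n f_n(q)$ vanishes at $\zeta_\ell$ to order at least $L_\ell:=\lfloor (N+1)/e_\ell\rfloor$. Thus the Taylor coefficients of $\mathcal{F}(\zeta_\ell e^{-u};N)$ of order $<L_\ell$ are already stable in $N$, and by the very definition of the asymptotic expansion attached to the (divergent) strange series they coincide with those of the genuinely convergent value $P_{a,b,\chi}^{(\nu)}(\zeta_\ell e^{-u})$, whose expansion in nonnegative powers of $u$ is guaranteed by (\ref{p1})--(\ref{p2}); this is exactly where the hypothesis $\mathcal{F}(\zeta_\ell e^{-t})\sim P_{a,b,\chi}^{(\nu)}(\zeta_\ell e^{-t})$ enters.

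Finally I would apply the same filter to $P_{a,b,\chi}^{(\nu)}$ itself. Interchanging the now absolutely convergent sums,
\[
\frac{1}{s}\sum_{\ell=0}^{s-1}\omega^{-i\ell}P_{a,b,\chi}^{(\nu)}(\zeta_\ell e^{-u})=\sum_{n\ge 0}n^{\nu}\chi(n)\,\xi^{\frac{n^2-a}{b}}e^{-u\frac{n^2-a}{b}}\cdot\frac{1}{s}\sum_{\ell=0}^{s-1}\omega^{\ell\left(\frac{n^2-a}{b}-i\right)},
\]
and the inner geometric sum equals $1$ or $0$ according as $\tfrac{n^2-a}{b}\equiv i \pmod s$ or not. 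As $i\notin S_{a,b,\chi}(s)$, no index $n$ with $\chi(n)\ne 0$ survives, so this filtered partial theta function is identically zero. Comparing the two displays, all Taylor coefficients of $\xi^{\,i}e^{-iu}A_{\mathcal{F},s}(N,i,\eta e^{-su})$ of order $<\min_\ell L_\ell$ vanish, whence $A_{\mathcal{F},s}(N,i,q)$ vanishes at $q=\eta$ to order at least $\min_\ell L_\ell$. The order bookkeeping then closes the argument: since $(\omega^\ell\xi)^{sd}=(\omega^s)^{\ell d}\eta^{d}=1$ we have $e_\ell\mid sd$, so $L_\ell=\lfloor (N+1)/e_\ell\rfloor\ge\lfloor (N+1)/(sd)\rfloor=\lfloor \lambda/d\rfloor$ by the nested-floor identity, giving exactly the order $\lfloor\lambda/d\rfloor$ demanded.

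I expect the main obstacle to be the analytic bookkeeping of the middle paragraph: rigorously identifying the stabilized low-order Taylor coefficients of the \emph{polynomial} partial sums $\mathcal{F}(\zeta_\ell e^{-u};N)$ with the coefficients of the asymptotic expansion supplied by the strange identity, and tracking, uniformly in $\ell$, that this matching persists up to order $L_\ell$. Everything else (the cyclotomic reduction, the two roots-of-unity filters, and the floor estimates) is essentially formal, but this error control is delicate because the series defining $\mathcal{F}$ diverges at $\zeta_\ell e^{-u}$ for fixed $u>0$, so only its stabilizing partial sums carry the asymptotic information that the hypothesis pins down.
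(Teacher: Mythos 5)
The paper itself offers no proof of this statement: it is quoted verbatim as the main result of Ahlgren--Kim--Lovejoy \cite{akl}, so there is no internal argument to compare against. Your proposal is, in substance, a correct reconstruction of the proof in \cite{akl}: the cyclotomic factorization $(q)_\lambda=\prod_{d\le\lambda}\Phi_d(q)^{\lfloor\lambda/d\rfloor}$ together with irreducibility of $\Phi_d$ reduces the divisibility to orders of vanishing at primitive $d$-th roots of unity; the $s$-fold roots-of-unity filter correctly isolates $\xi^i e^{-iu}A_{\mathcal{F},s}(N,i,\eta e^{-su})$; the identity $\operatorname{ord}_{q=\zeta}(q)_n=\lfloor n/e\rfloor$ for $\zeta$ of order $e$ gives the stabilization of the first $\lfloor (N+1)/e_\ell\rfloor$ Taylor coefficients of the partial sums; the filtered partial theta function vanishes identically because $i\notin S_{a,b,\chi}(s)$; and the final bookkeeping $e_\ell\mid sd\Rightarrow\lfloor (N+1)/e_\ell\rfloor\ge\lfloor\lfloor (N+1)/s\rfloor/d\rfloor=\lfloor\lambda/d\rfloor$ is exactly what is needed. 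The one genuinely load-bearing point is the one you flag yourself: the hypothesis $\mathcal{F}(\zeta e^{-t})\sim P^{(\nu)}_{a,b,\chi}(\zeta e^{-t})$ has content only once one fixes the convention (as \cite{akl} does) that the asymptotic expansion of the divergent series $\mathcal{F}$ at $\zeta e^{-t}$ \emph{means} the sequence of stabilized Taylor coefficients of its partial sums $\mathcal{F}(\zeta e^{-t};N)$; with that convention your identification of the low-order coefficients is immediate rather than delicate, and the argument is complete.
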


We now turn to our situation and record the following result. Throughout, we assume that $t \geq 2$. 

\begin{proposition} \label{step1} The periodic function $\chi_t$ (as defined by (\ref{genchi})) satisfies (\ref{p1}) and (\ref{p2}).
\end{proposition}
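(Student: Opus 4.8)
From the shape of $S_{t,\chi_t}(s)$ I would first read off that the ambient partial theta function has parameters $a=(2^{t+1}-3)^2$ and $b=3\cdot 2^{t+2}$. Put $M:=2^{t+1}$, so $b=2\cdot 3M$ and, modulo its period $3M$, the function $\chi_t$ is supported on the four (distinct, for $t\ge 2$) residues $M-3,\ M+3,\ 2M-3,\ 2M+3$ with signs $+,-,-,+$. Condition (\ref{p1}) is then a direct check: since $(n+3M)^2-n^2=6Mn+9M^2\equiv 0\pmod{6M}$, the residue of $n^2$ modulo $6M$ is well defined on $\mathbb{Z}/3M$, and on the four support classes the differences $n^2-(M-3)^2$ equal $0,\ 12M,\ 3M(M-2),\ 3M(M+6)$, each divisible by $6M$ because $M$ is even. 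Hence $\tfrac{n^2-a}{b}\in\mathbb{Z}$ precisely on the support of $\chi_t$, which is (\ref{p1}).

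For (\ref{p2}) I would fix a root of unity $\zeta$ of order $L$ and first note periodicity: shifting a support point by $P:=3ML$ preserves the class mod $3M$ and (using that $M$ is even) changes the exponent by $\tfrac{(n+P)^2-n^2}{b}=Ln+3\cdot 2^{t}L^2\in L\mathbb{Z}$, so $\zeta^{(n^2-a)/b}\chi_t(n)$ has period $P$. The value $\zeta=1$ gives mean $\tfrac{1}{3M}\sum_{n\bmod 3M}\chi_t(n)=0$ at once, the two $+1$'s cancelling the two $-1$'s. The difficulty is the remaining mean value zero for $\zeta\neq 1$, and here the obvious symmetry fails: one checks $-(M-3)\equiv 2M+3$ and $-(M+3)\equiv 2M-3\pmod{3M}$ with matching signs, so $\chi_t$ is \emph{even}, and since $\tfrac{n^2-a}{b}$ is even in $n$ as well, the involution $n\mapsto -n$ merely repeats each term.

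The observation that rescues the argument is a finer symmetry. Writing $\mathbb{Z}/3M\cong \mathbb{Z}/3\times \mathbb{Z}/M$ by the Chinese Remainder Theorem, I would verify from (\ref{genchi}) (splitting on the parity of $t$) that $\chi_t$ factors as a product of a function of the $\mathbb{Z}/3$-coordinate and a function of the $\mathbb{Z}/M$-coordinate, and that its $\mathbb{Z}/3$-part is \emph{odd} — it is $\pm\bigl(\tfrac{\cdot}{3}\bigr)$. To exploit this, write the mean over a period $T=6ML$ as $\tfrac1T\sum_{n\bmod T}\chi_t(n)\,e^{2\pi i c(n^2-a)/(bL)}$ (with $\zeta=e^{2\pi i c/L}$) and factor the summand through the CRT decomposition of $\mathbb{Z}/T$ into its prime-power parts: splitting $1/(bL)$ into partial fractions over the coprime parts shows that each coordinate phase depends on its coordinate $n_p$ only through $n_p^2$, hence is even in $n_p$. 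The total sum therefore factors as a product of coordinate sums, and the factor attached to the prime $3$ is a sum of an odd function against an even phase over a full residue system modulo $3^{v_3(T)}$; pairing $n_3\mapsto -n_3$ (the only fixed point $n_3\equiv 0$ carrying $\chi_t=0$) makes this factor vanish. Thus the whole mean is $0$, completing (\ref{p2}).

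I expect the bookkeeping of this CRT factorization to be the main obstacle — in particular, checking cleanly that the phase splits into coordinate-wise even factors for \emph{every} $L$ (including those sharing factors with $6M$), and that the odd $\mathbb{Z}/3$-part of $\chi_t$ is indeed the factor killed by the pairing. By contrast the oddness modulo $3$, once spotted, is an elementary finite check. A more pedestrian alternative would be to expand the period sum as $\sum_{r}\chi_t(r)\,\zeta^{(r^2-a)/b}\sum_{k=0}^{L-1}\zeta^{rk+3\cdot 2^{t}k^2}$ and evaluate the resulting quadratic Gauss sums directly; I would avoid this, since it forces a case analysis on $\gcd(L,6)$ that the symmetry argument sidesteps.
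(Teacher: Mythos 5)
Your proof is correct, and it takes a genuinely different route from the paper's. The paper verifies the mean-value-zero condition by a case split on the parity of the order $M$ of $\zeta$: when $M$ is even it pairs $n$ with $n+3\cdot 2^{t}M$ (a half-period of $\psi_t$), under which $\chi_t$ is invariant but the quadratic phase picks up a factor $\zeta^{M(\cdot)/2}=-1$ because the summand is supported on odd $n$; when $M$ is odd it expands the period sum into four quadratic exponential sums, one per support class, and cancels them in pairs via the explicit shifts $m\mapsto m\pm i_{t,M}$ with $2^{t}i_{t,M}\equiv 1\pmod M$. You instead give a uniform argument with no case split: the tensor factorization $\chi_t=f\otimes g$ under $\mathbb{Z}/3\cdot 2^{t+1}\cong\mathbb{Z}/3\times\mathbb{Z}/2^{t+1}$ (with $f=(-1)^{t+1}\bigl(\tfrac{\cdot}{3}\bigr)$ odd, which I have checked against (\ref{genchi}): the four support points land on $(\pm M,\mp 3)$ and $(\pm M,\pm 3)$ with the right signs), combined with the partial-fraction splitting of $c/(bL)$ over the prime-power parts of $T=bL$, does factor the full period sum into local factors, each phase factor depending only on $n_p^2$; the local factor at $3$ then vanishes by the odd-against-even pairing, and the concern you raise about $L$ sharing factors with $6M$ is harmless since partial fractions over $\mathbb{Z}$ needs no coprimality of the numerator. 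What your approach buys is uniformity in $\zeta$ and a transparent reason for the vanishing (an odd local component of $\chi_t$), and it would extend to other periodic weights with an odd factor at some odd prime; what the paper's approach buys is that it is entirely elementary and avoids the CRT and partial-fraction bookkeeping, at the price of two cases and the explicit inverse $i_{t,M}$. Your treatment of condition (\ref{p1}) and of periodicity matches the paper's in substance.
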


\begin{proof}
A straightforward calculation using (\ref{genchi}) confirms (\ref{p1}) with $a=(2^{t+1} - 3)^2$ and $b=3 \cdot 2^{t+2}$. Suppose $\zeta$ is a root of unity of order $M$ and define 

\begin{equation*}
\psi_{t}(n) = \zeta^{\frac{n^2 - (2^{t+1} - 3)^2}{3 \cdot 2^{t+2}}} \chi_t.
\end{equation*}

\noindent Note that $\psi_{t}$ has period $M(3\cdot 2^{t+1})$. We now claim that

\begin{equation} \label{mean}
\sum_{n=1}^{M(3\cdot 2^{t+1})} \psi_{t}(n) = 0.
\end{equation}

\noindent Suppose $M$ is even. By (\ref{genchi}), we obtain

\begin{equation} \label{chi}
\chi_t(n + 3\cdot 2^t M) = \chi_t(n)
\end{equation}

\noindent for all $n$. We also have

\begin{equation} \label{zeta}
\zeta^{\frac{(n+ 3 \cdot 2^t M)^2 - (2^{t+1} - 3)^2}{3 \cdot 2^{t+2}}} = \zeta^{\frac{M(n+ 3 \cdot 2^{t-1} M^2)}{2}} \zeta^{\frac{n^2 - (2^{t+1} - 3)^2}{3 \cdot 2^{t+2}}}.
\end{equation}

\noindent As $\psi_t$ is supported on odd integers, we can assume that $n$ is odd. Then $M(n+ 3 \cdot 2^{t-1} M^2)$ is an odd multiple of $M$ and so (\ref{zeta}) equals $-\zeta^{\frac{n^2 - (2^{t+1} - 3)^2}{3 \cdot 2^{t+2}}}$. This fact and (\ref{chi}) imply

\begin{equation*}
\sum_{n=1}^{M(3 \cdot 2^{t+1})} \psi_t(n) = \sum_{n=1}^{M(3\cdot 2^t)} (\psi_t(n) + \psi_t(n + 3 \cdot 2^t M)) = 0.
\end{equation*}

Now, suppose $M$ is odd. We break up (\ref{mean}) into four sums, one for each congruence class modulo $3\cdot 2^{t+1}$ in (\ref{genchi}). Specifically, 

\begin{align} \label{four}
\sum_{n=1}^{M(3\cdot 2^{t+1})} \psi_{t}(n) & = \sum_{m=0}^{M-1} \zeta^{3\cdot 2^t m^2 + (2^{t+2} + 3)m + 2^t + 3} - \sum_{m=0}^{M-1} \zeta^{3\cdot 2^t m^2 + (2^{t+2} - 3)m + 2^t -1} \nonumber \\
& + \sum_{m=0}^{M-1} \zeta^{3 \cdot 2^t m^2 + (2^{t+1} - 3)m} - \sum_{m=0}^{M-1} \zeta^{3 \cdot 2^t m^2 + (2^{t+1} + 3)m + 2}.
\end{align}

\noindent Let $i_{t,M}$ be the unique solution to $2^t i \equiv 1 \pmod{M}$. Performing the shift $m \to m + i_{t,M}$ (respectively, $m \to m - i_{t,M}$) to the second (respectively, fourth) sum in (\ref{four}) followed by a routine simplification implies (\ref{mean}). The result now follows.
\end{proof}

For $\chi_t$ as in (\ref{genchi}), consider the series 

\begin{equation*}
H_t(x) = H_t(x,q) = \sum_{n \geq 0} \chi_{t}(n) q^{\frac{n^2 - (2^{t+1} - 3)^2}{3\cdot 2^{t+2}}} x^{\frac{n - (2^{t+1} - 3)}{2}}.
\end{equation*}

\begin{proposition}\label{step2} Let $h(t)=2^{t}-2$. We have

\begin{align}  \label{Htomulti}
H_t(x) & = (-1)^{h''(t)} q^{-h'(t)} x^{-h(t)} \sum_{n \geq 0} (x)_{n+1} x^{n m(t)} \nonumber \\
& \times \sum_{3 \sum_{\ell = 1}^{m(t) - 1} j_{\ell} \ell \equiv 1 \pmod{m(t)}} (-x)^{\sum_{\ell=1}^{m(t)-1} j_{\ell}} q^{\frac{-a(t) + \sum_{\ell=1}^{m(t) - 1} j_{\ell} \ell}{m(t)} + \sum_{\ell=1}^{m(t)-1} \binom{j_{\ell}}{2}} \nonumber \\
& \times \sum_{k=0}^{m(t)-1} x^k \prod_{\ell=1}^{m(t) - 1} \begin{bmatrix} n + I(\ell \leq k) \\ j_{\ell} \end{bmatrix}.
\end{align}
\end{proposition}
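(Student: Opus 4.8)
The plan is to recognize the right-hand side of (\ref{Htomulti}) as a two-variable avatar of the colored Jones polynomial of $T(3,2^t)$ and to match it against the partial theta function $H_t(x)$. Comparing the multisum in (\ref{Htomulti}) term by term with Konan's formula (\ref{t32t}), the two expressions are identical under the substitution $x \leftrightarrow q^{-N}$, except that the factor $(q^{1-N})_n$ in (\ref{t32t}) is replaced by $(x)_{n+1}$; since $(q^{-N})_{n+1}=(1-q^{-N})(q^{1-N})_n$, these differ only by the overall factor $(1-x)$ and by the explicit prefactor $(-1)^{h''(t)}q^{-h'(t)}x^{-h(t)}$ in place of $(-1)^{h''(t)}q^{2^t-1-h'(t)-N}$. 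Thus the content of the proposition is that this $q$-hypergeometric closed form, read as a formal power series in $x$, coincides with the partial theta function $H_t(x)$.

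First I would fix the ring in which the identity lives. Setting $c=2^{t+1}-3$, each power $x^{(n-c)/2}$ occurring in $H_t(x)$ comes from a single admissible $n$, so $H_t(x)=\sum_{k} \chi_t(2k+c)\,q^{((2k+c)^2-c^2)/(3\cdot 2^{t+2})}\,x^{k}$ is a well-defined Laurent series in $x$ whose coefficient of each $x^k$ is a single signed monomial in $q$; the right-hand side is likewise a well-defined power series in $x$, since for each power of $x$ only finitely many $n$ contribute through $(x)_{n+1}x^{nm(t)}$. The point is therefore to prove a coefficientwise identity in $x$: the multisum on the right must collapse, for each fixed power of $x$, to the single monomial $\pm q^{(\ast)}$ dictated by $\chi_t$. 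As a sanity check, in the degenerate $t=1$ case this reduces to the classical partial theta identity $\sum_{n\ge0}(x)_{n+1}x^{n}=\sum_{k}\chi_{12}(2k+1)q^{(k^2+k)/6}x^{k}$, which is a consequence of the Rogers--Fine identity.

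To effect the collapse for general $t$ I would use the Rosso--Jones formula, which writes the torus knot invariant as a finite theta sum; its stabilized two-variable generating function produces exactly the character $\chi_t$ of (\ref{genchi}), the quadratic exponent $(n^2-c^2)/(3\cdot 2^{t+2})$, and the four residue classes modulo $3\cdot 2^{t+1}$ already appearing in $H_t$ and verified in Proposition \ref{step1}. The remaining task is to transform the Rosso--Jones theta sum into Konan's cyclotomic multisum: the product $\prod_{\ell}\bigl[\begin{smallmatrix} n+I(\ell\le k)\\ j_\ell\end{smallmatrix}\bigr]$ together with the constraint $3\sum_\ell j_\ell \ell \equiv 1 \imod{m(t)}$ should be the output of a $q$-binomial (Durfee-square) expansion followed by an arithmetic-progression extraction modulo $m(t)=2^{t-1}$ via roots of unity. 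Equivalently, one may try to bypass Rosso--Jones and sum the four theta components of $H_t(x)$ directly by an iterated Rogers--Fine / Bailey-type manipulation.

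The hard part will be exactly this last transformation: showing that the nested sum over the $j_\ell$, restricted by the modular condition, reassembles into the single theta coefficient. This is the genuine $q$-series content, a large cancellation transparent on the theta side and opaque on the cyclotomic side, and it is where the bookkeeping of the normalizing exponents $h'(t)$, $h''(t)$, $h(t)=2^{t}-2$ and $a(t)$ across the even and odd $t$ cases must be handled carefully. A secondary subtlety is that the identity is ``strange'': one must work with the expand-then-substitute (formal power series in $x$) interpretation rather than substitute-then-sum, since at $x=q^{-N}$ the left-hand theta series fails to terminate, so the equality cannot be verified by naively plugging in $x=q^{-N}$ into Konan's finite sum.
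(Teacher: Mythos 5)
There is a genuine gap: your argument is a plan rather than a proof, and the step you yourself flag as ``the hard part'' --- collapsing the constrained multisum over $j_1,\dotsc,j_{m(t)-1}$ to the single signed monomial $\chi_t(n)q^{(n^2-(2^{t+1}-3)^2)/(3\cdot 2^{t+2})}$ in each power of $x$ --- is exactly the content of the proposition and is nowhere carried out. Saying that it ``should be the output of'' a Durfee-square expansion and a root-of-unity extraction modulo $m(t)$ is a conjecture about the shape of an argument, not an argument. Moreover, the route through the Rosso--Jones formula is undercut by the very subtlety you note at the end: the two sides of (\ref{Htomulti}) are being compared as formal power series in a free variable $x$, not at the specializations $x=q^{-N}$, so agreement with the colored Jones polynomial at roots of unity (which is what Rosso--Jones gives you) does not by itself determine the series; you would essentially have to re-derive Konan's cyclotomic formula in the two-variable setting, which is the substance you are missing.

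The paper's proof sidesteps all of this. Writing $f_t(x)$ for the right-hand side of (\ref{Htomulti}), Konan's thesis (equation (3.3.11)) already establishes the first-order $q$-difference equation
\begin{equation*}
f_t(x) = 1 - q^2 x^3 - q^{2^t - 1} x^{2^t} + q^{3+2^t} x^{3+2^t} + q^{5\cdot 2^t - 3} x^{3\cdot 2^t} f_t(q^2 x),
\end{equation*}
and a direct computation with the four residue classes in (\ref{genchi}) shows that $H_t(x)$ satisfies the same equation. Since this recursion determines a formal power series solution uniquely (the coefficient of $x^m$ is forced by the polynomial part for $m<3\cdot 2^t$ and by the coefficient of $x^{m-3\cdot 2^t}$ otherwise), the identity follows with no term-by-term collapse of the multisum required. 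If you want to salvage your approach, the honest repair is to replace the Rosso--Jones detour by this functional-equation argument, or else to actually execute the extraction you describe --- but the latter is a substantial $q$-series computation, not a routine one.
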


\begin{proof}
Let $f_t(x)$ denote the right-hand side of (\ref{Htomulti}). Then $f_t(x)$ satisfies the difference equation (see (3.3.11) in \cite{konan})

\begin{equation} \label{diff}
f_t(x) = 1 - q^2 x^3 - q^{2^t - 1} x^{2^t} + q^{3+2^t} x^{3+2^t} + q^{5\cdot 2^t - 3} x^{3\cdot 2^t} f_t(q^2 x).
\end{equation}

\noindent One can directly verify that $H_t(x)$ also satisfies (\ref{diff}) using (\ref{genchi}). The result now easily follows.
\end{proof}

Recall that the Kontsevich-Zagier series $F(q)$ satisfies the ``strange identity"

\begin{equation} \label{strid}
F(q) `` = " -\frac{1}{2} \sum_{n \geq 1} n \Bigl( \frac{12}{n} \Bigr) q^{\frac{n^2 - 1}{24}}
\end{equation}

\noindent where $``="$ means that the two sides agree to all orders at every root of unity (for further details, see Sections 2 and 5 in \cite{z1}) and $\bigl( \frac{12}{*} \bigr)$ is the quadratic character of conductor $12$. We now prove the following new strange identity for the Kontsevich-Zagier series for torus knots $T(3,2^t)$\footnote{Using the convention in footnote \ref{t1}, we can take $t=1$ in (\ref{genchi}) and (\ref{strangeFt}) to recover (\ref{strid}).}.

\begin{theorem} \label{newstr} For each root of unity $\zeta$, we have the asymptotic expansion

\begin{equation} \label{strangeFt}
\mathscr{F}_t(\zeta e^{-u}) \sim -\frac{1}{2} P_{(2^{t+1} -3)^2, 3\cdot 2^{t+2},\chi_t}^{(1)}(\zeta e^{-u})
\end{equation}

\noindent as $u \to 0^{+}$.
\end{theorem}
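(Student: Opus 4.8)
The plan is to derive (\ref{strangeFt}) from the auxiliary two-variable series $H_t(x)$: its definition expresses it as a partial theta function, while Proposition \ref{step2} expresses it as a $q$-hypergeometric multisum, and differentiating in $x$ and specializing at $x=1$ passes from one to the other. Set $c=2^{t+1}-3$ and abbreviate $P^{(\nu)}:=P^{(\nu)}_{(2^{t+1}-3)^2,\,3\cdot2^{t+2},\,\chi_t}$, so that by definition
\[
H_t(x)=\sum_{n\ge0}\chi_t(n)\,q^{\frac{n^2-c^2}{3\cdot2^{t+2}}}\,x^{\frac{n-c}{2}}.
\]
The first step is to factor the right-hand side of (\ref{Htomulti}). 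Since $(x)_{n+1}=(1-x)(xq)_n$, we may write $H_t(x)=(1-x)\widetilde G_t(x)$, where $\widetilde G_t(x)$ is obtained from (\ref{Htomulti}) upon replacing $(x)_{n+1}$ by $(xq)_n$. Setting $x=1$ sends $x^{-h(t)}$, $x^{nm(t)}$ and $x^{k}$ to $1$ and $(xq)_n$ to $(q)_n$, so a term-by-term comparison with (\ref{kztorus}) identifies $\widetilde G_t(1)$ with $\mathscr{F}_t(q)$. The cross-check that pins down the signs is that substituting $x=q^{-N}$ into (\ref{Htomulti}) recovers (\ref{t32t}) up to the factor $(1-q^{-N})\,q^{(2^t-1)(N-1)}$, consistently with $\zeta_N^{2^t-1}\mathscr{F}_t(\zeta_N)=J_N(T(3,2^t);\zeta_N)$.

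Next I would differentiate both expressions for $H_t(x)$ in $x$ and evaluate at $x=1$. From the partial theta definition,
\[
\frac{d}{dx}H_t(x)\Big|_{x=1}=\sum_{n\ge0}\frac{n-c}{2}\,\chi_t(n)\,q^{\frac{n^2-c^2}{3\cdot2^{t+2}}}=\tfrac12P^{(1)}(q)-\tfrac{c}{2}P^{(0)}(q),
\]
whereas from the factored multisum, because of the explicit prefactor $(1-x)$, one gets $\frac{d}{dx}H_t(x)\big|_{x=1}=-\widetilde G_t(1)=-\mathscr{F}_t(q)$. Equating the two yields the formal strange identity
\[
\mathscr{F}_t(q)=-\tfrac12P^{(1)}(q)+\tfrac{c}{2}P^{(0)}(q).
\]

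It then remains to discard the $P^{(0)}$ term, which is exactly where Proposition \ref{step1} enters. Since $\chi_t$ is even with $\chi_t(0)=0$, the function $P^{(0)}$ is (half of) a bilateral weight-$\tfrac12$ theta series, and the mean value zero condition (\ref{p2}), verified at every root of unity in Proposition \ref{step1}, says precisely that this theta series is cuspidal at every cusp. Hence $P^{(0)}(\zeta e^{-u})$ is exponentially small, and in particular vanishes to all orders, as $u\to0^+$, leaving $\mathscr{F}_t(\zeta e^{-u})\sim-\tfrac12P^{(1)}(\zeta e^{-u})$, which is (\ref{strangeFt}).

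The main obstacle is interpretive rather than computational: the identity $\widetilde G_t(1)=\mathscr{F}_t(q)$ involves the nonconvergent series $\sum_n(q)_n(\cdots)$, so the differentiation at $x=1$ is a priori only formal, and the resulting equality must be read as agreement of asymptotic expansions at roots of unity, in the sense of the strange identity (\ref{strid}). I would make this rigorous within the framework of \cite{z1}, justifying the interchange of the radial limit $q\to\zeta$ with $x\to1$ by working in the region $|x|<1$ where $H_t(x)$ converges and invoking the Euler--Maclaurin asymptotics of partial theta functions guaranteed by (\ref{p2}); this is also the point at which the constant $-\tfrac12$ should be confirmed against a low-order expansion such as $\mathscr{F}_2(1-q)=1+3q+11q^2+\cdots$.
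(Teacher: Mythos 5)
Your overall strategy coincides with the paper's: both proofs compute $\frac{d}{dx}H_t(x)\big|_{x=1}$ in two ways, once from the partial theta definition of $H_t$ (giving $\tfrac12 P^{(1)}-\tfrac{c}{2}P^{(0)}$) and once from the multisum representation (\ref{Htomulti}) (giving $-\mathscr{F}_t(q)$ up to terms that are negligible at roots of unity). The gap is in the second computation. You factor $H_t(x)=(1-x)\widetilde G_t(x)$ and apply the product rule to conclude $\frac{d}{dx}H_t(x)\big|_{x=1}=-\widetilde G_t(1)=-\mathscr{F}_t(q)$, but $\widetilde G_t$ is not continuous at $x=1$: the series defining $\widetilde G_t(1)$ is $\sum_n (q)_n(\cdots)$, whose terms do not tend to zero for $|q|<1$, so neither $\widetilde G_t(1)$ nor $\widetilde G_t'(1)$ exists and the product rule cannot be invoked; deferring this to ``the framework of \cite{z1}'' skips the actual content of the proof. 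What the paper does instead is split $(qx)_n=[(qx)_n-(qx)_\infty]+(qx)_\infty$ in (\ref{Htomulti}): the bracketed piece gives a series that converges at $x=1$ and may be differentiated termwise, while for the remaining piece one must prove the telescoping identity $(1-x)M_t(x,q)=\sum_{n\ge0}b_{n,t}(q)x^n$ of (\ref{rewrite2}) --- established by a nontrivial reindexing of the multisum --- which turns $(1-x)M_t(x,q)$ into an honest power series whose derivative at $x=1$ makes sense. This produces the additional $(q)_\infty$-terms on the right-hand side of (\ref{Keyidgen}); they vanish to infinite order at roots of unity, which is why your formal answer is correct, but establishing (\ref{rewrite2}) is the bulk of the paper's proof and is absent from your plan.

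A second, smaller issue concerns discarding $P^{(0)}$. Condition (\ref{p2}) (mean value zero) only kills the constant term of the asymptotic expansion of $P^{(0)}(\zeta e^{-u})$; by itself it does not make the series ``cuspidal'' or force vanishing to all orders. The correct reason, and the route the paper takes, is that $\chi_t$ is even and supported away from $0$, so $P^{(0)}$ is half of a genuine bilateral theta series, which the paper evaluates via Watson's quintuple product identity as $(q^{2^t-1},q^{2^t+1},q^{2^{t+1}};q^{2^{t+1}})_\infty(q^2,q^{2^{t+2}-2};q^{2^{t+2}})_\infty$; this product contains the factor $(q^{2^{t+1}};q^{2^{t+1}})_\infty$ and hence vanishes to infinite order at every root of unity. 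Your conclusion is right, but it needs either this product evaluation or an argument from the evenness of $\chi_t$ (vanishing of the relevant $L$-values at non-positive even integers), not condition (\ref{p2}) alone.
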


\begin{proof}[Proof of Theorem \ref{newstr}]
For ease of notation, let ${}^{'}$ denote the condition

\begin{equation*}
3 \sum_{\ell=1}^{m(t)-1} j_{\ell} \ell \equiv 1 \pmod{m(t)}
\end{equation*}

\noindent occurring in the second sum in (\ref{t32t}) on the $j_{\ell}$,

\begin{equation*}
v=v(j_1, \dotsc, j_{m(t)-1}) := \frac{-a(t) + \sum_{\ell=1}^{m(t) - 1} j_{\ell} \ell}{m(t)} + \sum_{\ell=1}^{m(t)-1} \binom{j_{\ell}}{2}
\end{equation*}

\noindent and $\overline{c}$ the reduction of an integer $c$ modulo $m(t)$. The following identity\footnote{The $t=2$ case of (\ref{Keyidgen}) gives an alternative (and corrected) version of Proposition 5 in \cite{hk2}.} implies (\ref{strangeFt}) upon setting $q=\zeta e^{-u}$, then letting $u \to 0^{+}$:

\begin{align} \label{Keyidgen}
\frac{1}{2} & \sum_{n \geq 0} n \chi_{t}(n) q^{\frac{n^2 - (2^{t+1} - 3)^2}{3\cdot 2^{t+2}}} - \frac{2^{t+1} - 3}{2} (q^{2^t - 1}, q^{2^t + 1}, q^{2^{t+1}}; q^{2^{t+1}})_{\infty} (q^2, q^{2^{t+2} - 2}; q^{2^{t+2}})_{\infty} \nonumber \\
& = (-1)^{h''(t)+1} q^{-h'(t)}  \sum_{n \geq 0} \Bigl[ (q)_n - (q)_{\infty} \Bigr] \nonumber \\
& \qquad \qquad \qquad \qquad \qquad \times \sideset{}{'} \sum_{j_1, \dotsc, j_{m(t)-1}} (-1)^{\sum_{{\ell}=1}^{m(t)-1} j_{\ell}} q^{v} \sum_{k=0}^{m(t)-1} \prod_{{\ell}=1}^{m(t)-1} \begin{bmatrix} n + I(\ell \leq k) \\ j_{\ell} \end{bmatrix} \nonumber \\
& + (-1)^{h''(t)+1} q^{-h'(t)} (q)_{\infty} \Biggl( \sum_{i=1}^{\infty} \frac{q^i}{1-q^i} \Biggr)  \sum_{n \geq 0} b_{n,t}(q) + (-1)^{h''(t)} q^{-h'(t)} (q)_{\infty} \sum_{n \geq 0} (n - h(t)) b_{n,t}(q) 
\end{align}

\noindent where

\begin{equation} \label{bnt}
b_{n,t}(q) = a_{n,t}(q) - a_{n-1,t}(q)
\end{equation}

\noindent and

\begin{equation} \label{ant}
a_{n,t}(q) = \sideset{}{'} \sum_{j_1, \dotsc, j_{m(t)-1}} (-1)^{\sum_{\ell =1}^{m(t)-1} j_{\ell}} q^{v} \prod_{\ell =1}^{m(t)-1} \begin{bmatrix} \frac{n - \sum_{\ell =1}^{m(t)-1} j_{\ell} - (\overline{n - \sum_{\ell =1}^{m(t)-1} j_{\ell}})}{m(t)} + I(\ell \leq \overline{n - \sum_{\ell =1}^{m(t)-1} j_{\ell}}) \\ j_{\ell} \end{bmatrix}.
\end{equation}

\noindent To prove (\ref{Keyidgen}), we begin by rewriting (\ref{Htomulti}) as

\begin{align} \label{rewrite}
H_t(x) & =  (-1)^{h''(t)} q^{-h'(t)} (1-x) \sum_{n \geq 0} \Bigl[ (qx)_n - (qx)_{\infty} \Bigr] x^{n m(t)} \nonumber \\
& \times  \sideset{}{'} \sum_{j_1, \dotsc, j_{m(t)-1}} (-1)^{\sum_{\ell =1}^{m(t)-1} j_{\ell}} x^{-h(t) + \sum_{\ell =1}^{m(t)-1} j_{\ell}} q^{v} \sum_{k=0}^{m(t)-1} x^k \prod_{\ell=1}^{m(t) - 1} \begin{bmatrix} n + I(\ell \leq k) \\ j_{\ell} \end{bmatrix} \nonumber \\
& + (qx)_{\infty} (-1)^{h''(t)} q^{-h'(t)} x^{-h(t)} (1-x) M_t(x,q)
\end{align}

\noindent where

\begin{equation} \label{Mdef}
M_t(x,q) := \sum_{n \geq 0} x^{n m(t)} \sideset{}{'} \sum_{j_1, \dotsc, j_{m(t)-1}} (-x)^{\sum_{\ell =1}^{m(t)-1} j_{\ell}} q^{v} \sum_{k=0}^{m(t)-1} x^k \prod_{\ell=1}^{m(t) - 1} \begin{bmatrix} n + I(\ell \leq k) \\ j_{\ell} \end{bmatrix}.
\end{equation}

\noindent We now claim that 

\begin{equation} \label{rewrite2}
(1-x) M_{t}(x,q) = \sum_{n \geq 0} b_{n,t}(q) x^n
\end{equation}

\noindent where $b_{n,t}(q)$ is given by (\ref{bnt}) and (\ref{ant}). To see (\ref{rewrite2}), we first write

\begin{equation} \label{rewrite3}
\sum_{\ell = 1}^{m(t) -1 } j_{\ell} = m(t) \Biggl \lfloor \frac{\sum_{\ell=1}^{m(t)-1} j_{\ell}}{m(t)} \Biggr \rfloor + \overline{\sum_{\ell=1}^{m(t)-1} j_{\ell}}.
\end{equation}

\noindent We now have that $M_t(x,q)$ equals

\begin{align*}
& \sideset{}{'} \sum_{j_1, \dotsc, j_{m(t)-1}} \sum_{k=0}^{m(t)-1} \sum_{n \geq 0} (-1)^{\sum_{\ell = 1}^{m(t)-1} j_{\ell}} x^{m(t) \Bigl (n + \Bigl \lfloor \frac{\sum_{\ell=1}^{m(t)-1} j_{\ell}}{m(t)} \Bigr \rfloor \Bigr) + k + \overline{\sum_{\ell=1}^{m(t)-1} j_{\ell}}} q^v \prod_{\ell=1}^{m(t) - 1} \begin{bmatrix} n + I(\ell \leq k) \\ j_{\ell} \end{bmatrix} \\
& \quad \quad \quad (\text{using (\ref{Mdef}) and (\ref{rewrite3})}) \\
& = \sideset{}{'} \sum_{j_1, \dotsc, j_{m(t)-1}} \sum_{k=0}^{m(t)-1} \sum_{n \geq 0} (-1)^{\sum_{\ell =1}^{m(t)-1} j_{\ell}} x^{m(t)n + k + \overline{\sum_{\ell=1}^{m(t)-1} j_{\ell}}} q^v \prod_{\ell=1}^{m(t) - 1} \begin{bmatrix} n - \Bigl \lfloor \frac{\sum_{\ell=1}^{m(t)-1} j_{\ell}}{m(t)} \Bigr \rfloor + I(\ell \leq k) \\ j_{\ell} \end{bmatrix} \\
& \quad \quad \quad \Biggl (\text{letting $n \to n - \Biggl \lfloor \frac{\sum_{\ell=1}^{m(t)-1} j_{\ell}}{m(t)} \Biggr \rfloor$} \Biggr) \\
& = \sum_{k=0}^{m(t)-1} \sideset{}{'} \sum_{j_1, \dotsc, j_{m(t)-1}} \sum_{n \equiv k +  \overline{\sum_{\ell=1}^{m(t)-1} j_{\ell}} \pmod{m(t)}} (-1)^{\sum_{\ell =1}^{m(t)-1} j_{\ell}} x^n q^v  \prod_{\ell=1}^{m(t) - 1} \begin{bmatrix} \frac{n - k  - \sum_{\ell=1}^{m(t)-1} j_{\ell}}{m(t)} + I(\ell \leq k) \\ j_{\ell} \end{bmatrix} \\
& \quad \quad \quad \Biggl (\text{letting $n \to \frac{n - k - \overline{\sum_{\ell=1}^{m(t)-1} j_{\ell}}}{m(t)}$ and using (\ref{rewrite3})} \Biggr) \\
& = \sum_{n \geq 0} \sideset{}{'} \sum_{j_1, \dotsc, j_{m(t)-1}} x^n (-1)^{\sum_{\ell =1}^{m(t)-1} j_{\ell}} q^{v} \prod_{\ell =1}^{m(t)-1} \begin{bmatrix} \frac{n - \sum_{\ell =1}^{m(t)-1} j_{\ell} - (\overline{n - \sum_{\ell =1}^{m(t)-1} j_{\ell}})}{m(t)} + I(\ell \leq \overline{n - \sum_{\ell =1}^{m(t)-1} j_{\ell}}) \\ j_{\ell} \end{bmatrix} \\
& \quad \quad \quad \Biggl (\text{combining the outer and innermost sum and replacing $k$ with $\overline{n - \sum_{\ell =1}^{m(t)-1} j_{\ell}}$} \Biggr) \\
& = \sum_{n \geq 0} a_{n,t}(q) x^n.
\end{align*}

\noindent Thus, we obtain (\ref{rewrite2}) via (\ref{bnt}). We now substitute (\ref{rewrite2}) into (\ref{rewrite}), then take the derivative of both sides of (\ref{rewrite}) with respect to $x$ and set $x=1$. This yields the right-hand side of (\ref{Keyidgen}). To obtain the left-hand side of (\ref{Keyidgen}), we additionally break up the resulting second sum into four sums, one for each congruence class modulo $3 \cdot 2^{t+1}$ in (\ref{genchi}). Namely,

\begin{align} \label{four2}
\sum_{n \geq 0} \chi_t(n) q^{\frac{n^2 - (2^{t+1} - 3)^2}{3\cdot 2^{t+2}}} & = \sum_{k \geq 0} \Bigl( q^{3\cdot 2^t k^2 + (2^{t+1} - 3)k} - q^{3\cdot 2^t k^2 + (2^{t+2} - 3)k + 2^t - 1} \Bigr) \nonumber \\
& + \sum_{k \geq 0} \Bigl( q^{3 \cdot 2^t k^2 + (2^{t+2} + 3)k + 2^t + 3} - q^{3\cdot 2^t k^2 + (2^{t+1} + 3)k + 2} \Bigr).
\end{align}
Let $k \to k-1$, then replace $k$ with $-k$ in the third and fourth sums of (\ref{four2}) and combine with the first and second sums in (\ref{four2}) to obtain

\begin{equation} \label{lstep}
\sum_{k \in \mathbb{Z}} \Bigl( q^{3\cdot 2^t k^2 + (2^{t+1} - 3)k} - q^{3\cdot2^t k^2 + (2^{t+2} - 3)k + 2^{t} - 1} \Bigr).
\end{equation}
We now apply Watson's quintiple product identity

\begin{equation*} \label{wqpi}
\sum_{k \in \mathbb{Z}} q^{\frac{k(3k-1)}{2}} x^{3k} (1-xq^k) = (q, x, qx^{-1})_{\infty} (qx^2, qx^{-2}; q^2)_{\infty}
\end{equation*}

\noindent with $q \to q^{2^{t+1}}$ and $x=q^{2^t -1}$ to (\ref{lstep}). This proves the result.
\end{proof}

\begin{remark} If we take $x=1$ in (\ref{rewrite}), apply Watson's quintiple product identity as above and use the fact that the sum in (\ref{rewrite2}) telescopes, then we have

\begin{align} \label{genslater}
(q)_{\infty} (-1)^{h''(t)} q^{-h'(t)} \sideset{}{'} \sum_{j_1, \dotsc, j_{m(t)-1}} & (-1)^{\sum_{\ell =1}^{m(t)-1} j_{\ell}} \frac{q^{v}}{(q)_{j_1} \cdots (q)_{j_{m(t)-1}}} \nonumber \\
& = (q^{2^t - 1}, q^{2^t + 1}, q^{2^{t+1}}; q^{2^{t+1}})_{\infty} (q^2, q^{2^{t+2} - 2}; q^{2^{t+2}})_{\infty}.
\end{align}
This recovers Slater's identity (see (86) in \cite{slater})
\begin{equation*}
(q)_{\infty} \sum_{n \geq 0} \frac{q^{2n(n+1)}}{(q)_{2n+1}} = (q^3, q^5, q^8; q^8)_{\infty} (q^2, q^{14}; q^{16})_{\infty}
\end{equation*} 
upon taking $t=2$ in (\ref{genslater}) and simplifying. It would be interesting to give an alternative proof of (\ref{genslater}), possibly using Bailey pairs or a combinatorial interpretation.
\end{remark}

\section{Proof of Theorem \ref{main}}

\begin{proof}[Proof of Theorem \ref{main}]
Let $p$ be a prime $\geq 5$ and $n \geq r$ be an integer. Consider the truncation $\mathscr{F}_{t}(q; pn-1)$ of (\ref{kztorus}) and its $p$-dissection

\begin{equation*}
\mathscr{F}_{t}(q; pn-1) = \sum_{i=0}^{p-1} q^i A_{\mathscr{F}_t, p}(pn-1, i, q^p).
\end{equation*}

\noindent We have

\begin{equation*}
\mathscr{F}_{t}(1-q; pn-1) = \sum_{i \in S_{t, \chi_t}(p)} (1-q)^i A_{\mathscr{F}_t, p}(pn-1, i, (1-q)^p) + \sum_{i \not \in S_{t, \chi_t}(p)} (1-q)^i A_{\mathscr{F}_t, p}(pn-1, i, (1-q)^p).
\end{equation*}

\noindent By Theorem \ref{strangetocong}, Proposition \ref{step1} and Theorem \ref{newstr}, we can write

\begin{equation*}
\mathscr{F}_{t}(1-q; pn-1) = \sum_{i \in S_{t, \chi_t}(p)} (1-q)^i A_{\mathscr{F}_t, p}(pn-1, i, (1-q)^p) + (1-(1-q)^p)^n \sum_{i \not \in S_{t, \chi_t}(p)} (1-q)^i g_i(q)
\end{equation*}

\noindent for $g_i(q) \in \mathbb{Z}[q]$. By Lemma 3.3 in \cite{straub},

\begin{equation*} 
(1-(1-q)^p)^n \equiv O(q^{pn - (p-1)(r-1)}) \pmod{p^r}
\end{equation*}

\noindent and so

\begin{equation*} \label{almost}
\mathscr{F}_{t}(1-q; pn-1) = \sum_{i \in S_{t, \chi_t}(p)} (1-q)^i A_{\mathscr{F}_t, p}(pn-1, i, (1-q)^p) + O(q^{pn - (p-1)(r-1)}) \pmod{p^r}.
\end{equation*}

\noindent Choosing $n$ large enough, it suffices to show that the coefficient of $q^{p^r m - j}$ in $(1-q)^i A_{\mathscr{F}_t, p}(pn-1, i, (1-q)^p)$ vanishes modulo $p^r$ for all $i \in S_{t, \chi_t}(p)$. Since $A_{\mathscr{F}_t, p}(pn-1, i, (1-q)^p) \in \mathbb{Z}[q]$, it suffices to show that 

\begin{equation} \label{cong}
\binom{i+lp}{p^r m - j} \equiv 0 \pmod{p^r}
\end{equation}

\noindent for natural numbers $l$ and $j \in \{1, 2, \dotsc, p - 1 - \text{max} \,\, S_{t, \chi_t}(p) \}$. The condition on $j$ implies that $j < p - i$ and so (\ref{cong}) follows from Lemma 3.4 in \cite{straub}. This proves the result.
\end{proof}

\section{Further comments}

There are several avenues for further study. First, one could investigate combinatorial descriptions and asymptotic properties for the numbers $\xi_t(n)$. Second, the colored Jones polynomial of a knot $K$ satisfies a \emph{cyclotomic expansion} of the form
\begin{equation*} 
J_N(K;q) = \sum_{n \geq 0} (q^{1+N})_n(q^{1-N})_n C_n(K;q),
\end{equation*}
where the \emph{cyclotomic coefficients} $C_n(K;q)$ are Laurent polynomials independent of $N$ \cite{habiro2}. It is highly desirable to find the cyclotomic expansion for the torus knots $T(3,2^t)$, $t \geq 2$. For $t=1$, the cyclotomic expansion has been found by Masbaum \cite{masbaum}. However, it is unclear if his techniques are sufficient when $t \geq 2$. They require finding a link whose components are unknots, from which $T(3,2)$ can be recovered by introducing twists into a single region of one of the components. Employing this process for $t \geq 2$ appears to require extending it to allow for multiple twist regions. Third, in relation to Theorem \ref{newstr}, a strange identity for the Kontsevich-Zagier series associated to the torus knots $T(2, 2t+1)$ for $t \geq 1$ has also been computed (see (15) in \cite{hikami2}). Do strange-type identities exist for similar $q$-series associated to satellite or hyperbolic knots? Fourth, as the Kontsevich-Zagier series $F(q)$ is a foundational example of a quantum modular form \cite{zagier}, it is natural to wonder if the same is true for $\mathscr{F}_t(q)$. This is the subject of the paper \cite{ano}. Finally, we are pleased to report that the techniques in this paper have recently been extended in \cite{an} to prove prime power congruences for additional families of generalized Fishburn numbers.

\section*{Acknowledgements}

The second author was partially funded by the Natural Sciences and Engineering Research Council of Canada. The fourth author would like to thank the Max-Planck-Institut f\"ur Mathematik for their support during the initial stages of this project, the Ireland Canada University Foundation for the James M. Flaherty Visiting Professorship award, McMaster University for their hospitality during his stay from May 17 to August 9, 2019 and Armin Straub and Byungchan Kim for their computational help. The first, third, sixth and seventh authors graciously thank the Fields Institute for the opportunity to take part in their 2019 Undergraduate Summer Research Program. Finally, the authors thank the referee for their careful reading and helpful suggestions.

\end{document}